\documentclass{amsart}
\usepackage{amsfonts,amssymb,amsmath,amsthm}
\usepackage{enumerate}

\usepackage{graphicx, color}

\makeatletter
\@namedef{subjclassname@2020}{\textup{2020} Mathematics Subject Classification}
\makeatother

\makeatletter

%\begin{document}

%\urlstyle{sf}
\newtheorem*{thrm}{Theorem}
\newtheorem{lem}{Lemma}
\newtheorem{prop}{Proposition}
\newtheorem*{cor}{Corollary}
\newtheorem{definition}{Definition}

\newtheorem{remark}{Remark}
\newtheorem{ex}{Example}
\numberwithin{equation}{section}

%%%%%%%%%%%%%%%%%%%%%%%%%%%%%%%%%%%%%%%%%%%%%%%%%%%%%%%%%%%%%%%%%%%%%%%%
%\date{Aug 3, 2023}
\begin{document}
\title[L-shadowing lemma for the Cauchy equation]{L-shadowing lemma for the Cauchy equation}

\author[K. Lee]{K. Lee}
\address{Department of Mathematics, Chungnam National University, Daejeon, 34134, Republic of Korea.}
\email{khlee@cnu.ac.kr}

\author[C.A. Morales]{C.A. Morales}
\address{Beijing Advanced Innovation Center for Future Blockchain and Privacy Computing, Beihang University, Beijing, China.}
\email{morales@impa.br}

\keywords{Shadowing property, Semigroup of linear operators, Banach space}

\subjclass[2020]{Primary 54G99; Secondary 37B05}

\begin{abstract}
We prove that if the Cauchy problem $\dot{u}=Au$ in a Banach space is hyperbolic, then the problem has the L-shadowing property. Conversely, if the space is finite-dimensional and the L-shadowing property is satisfied, then the problem is hyperbolic. This generalizes a previous result by Ombach \cite{o, o1} for linear homeomorphisms. Some short applications are given.
\end{abstract}

\maketitle

%%%%%%%%%%%%%%%%%%%%%%%%%%%%%%%%%%%%%%%%%%%%%%%%%%%%%%%%%%%%%%%%%%%%%%%%%%%%%

%\maketitle

%%%%%%%%%%%%%%%%%%%%%%%%%%%%%%%%%%%%%%%%%%%%%%%%%%%%%%%%%%%%%%%%%%%%%%%%%%%%%

\section{Introduction}

\noindent
The concept of the {\em shadowing property} was originally introduced by Bowen in the discrete case \cite{b}, and its extension to flows was studied by Franke and Selgrade \cite{fs}, as well as later by Thomas \cite{t}. This property holds a fundamental role within the dynamical literature, as demonstrated, for instance, in \cite{kh}. A variation known as the {\em L-shadowing property}, initially termed the "S-limit shadowing property" by Lee and Sakai \cite{ls}, was subsequently independently rediscovered and named by Carvalho and Cordeiro \cite{cc}. Research into the shadowing property of partial differential equations (PDEs) has been undertaken. For instance, a study by Pilyugin \cite{pil} demonstrated this property in relation to the time-$t$ mapping of the Chafee-Infante equation $u_t=\Delta u+bu+f(u)$ on the open interval $(0,\pi)$. This was observed under Dirichlet conditions in $H^1_0(0,\pi)$ within a vicinity of the global attractor. However, as of now, no established outcomes exist regarding the L-shadowing property within the realm of PDEs.

In this paper, we initiate the study of the L-shadowing property in PDEs through the abstract Cauchy problem
\begin{equation}
\label{eq1}
\dot{u}=Au
\end{equation}
in a (complex) Banach space $X$. Specifically, we address the conditions under which it exhibits the L-shadowing property. Our main results establish that if the problem \eqref{eq1} is hyperbolic, it inherently satisfies the L-shadowing property. Furthermore, in cases where the L-shadowing property is met and the dimension of $X$ is finite, the problem is hyperbolic. These findings draw inspiration from analogous results obtained by Ombach in the realm of linear homeomorphisms \cite{o,o1}. To provide practical context, we also offer a selection of applications stemming from these results. Hereafter, we proceed to elaborate on these contributions in detail. 
 
A \emph{semiflow} of a metric space $X$ is a one-parameter family of continuous maps $T =\{T(t) : X \to X\}_{t \geq 0}$ that satisfies the following conditions: $T(0) = I$ (the identity map on $X$), and $T(s + t) = T(s) \circ T(t)$ for all $s, t \geq 0$. The individual map $T(t)$ for a given $t \geq 0$ is called the \emph{time $t$-map}.

The shadowing property for semiflows, which we will adopt here, is a natural forward-in-time modification of the corresponding definition for flows introduced by Komuro \cite{k}. This definition is similar to the ones used by Grag and Das \cite{gd}, as well as by \cite{hwl}. For the original definition of shadowing for flows, refer to Franke, Selgrade and Thomas \cite{fs,t}.

Let $T$ be a semiflow on a metric space $X$.
Given $\delta,R>0$ a sequence $(x_i,t_i)_{i\geq0}\subset X\times (0,\infty)$ is called $(\delta,R)$-pseudo orbit of $T$ if
$$
t_i\geq R\quad\mbox{ and }\quad
d(T(t_i)x_i,x_{i+1})<\delta,\quad\forall i\geq0.
$$
Define
$$
\hat{t}_0=0
\quad\mbox{ and }\quad
\hat{t}_i=\sum_{j=0}^{i-1}t_j,\quad\quad\mbox{ for }i>0
$$
We introduce the useful notation
$$
x_0 * t=\phi_{t-\hat{t}_i}(x_i),\quad\quad\forall  i\geq0,\,\hat{t}_i\leq t<\hat{t}_{i+1}.
$$

\begin{definition}
A semiflow $T$ has the {\em shadowing property} if for every $\epsilon>0$ there are $\delta, R>0$ such that every $(\delta, R)$-pseudo orbit of $T$ there are $x\in X$
and an increasing homeomorphism $h:[0,\infty)\to [0,\infty)$ such that
$$
d(T(h(t))x,x_0 * t)<\epsilon\quad\quad\forall t\geq0.
$$
\end{definition}

The following definition is the forward-in-time variation of the L-shadowing property proposed in \cite{dln,ls}.

\medskip

\begin{definition}
A semiflow $T$ has the {\em L-shadowing property} if for every $\epsilon>0$ there are $\delta, R>0$ such that every
$(\delta, R)$-pseudo orbit $(x_i,t_i)_{i\geq0}$ of $T$
satisfying
$$
\lim_{i\to\infty}d(T(t_i)x_i,x_{i+1})=0,
$$
there are $x\in X$ and an increasing homeomorphism $h:[0,\infty)\to[0,\infty)$ such that
$$
d(T(h(t))x,x_0 * t)<\epsilon\quad\quad(\forall t\geq 0)
\quad\mbox{ and }\quad
\lim_{t\to\infty}d(T(t)x,x_0 * t)=0.
$$
\end{definition}

Now, we focus on the case when $X$ is a Banach space with the norm $\|\cdot\|$.
By a {\em semigroup of linear operators} of $X$ we mean a semiflow
$T$ of $X$ such that $T(t)$ is a bounded linear operator of $X$, for all $t\geq0$.
We can associate a map $A:D(A)\to X$ with domain $D(A)$ defined by
$$
D(A)=\{x\in X:\lim_{t\to0}\frac{T(t)x-x}t\mbox{ exists}\}\mbox{ and }
A(x)=\lim_{t\to0}\frac{T(t)x-x}t,\quad\forall x\in D(A).
$$
It is clearly linear but not necessarily bounded. It is often called
{\em infinitesimal generator} of $T$.
We say that $T$ is a {\em $C_0$-semigroup} if
$$
\lim_{t\to0}\|T(t)x-x\|=0,\quad\quad\forall x\in X.
$$

We will assume that the linear operator $A$ in the Cauchy problem \eqref{eq1} is the infinitesimal generator of a $C_0$-semigroup of linear operators of $X$.
The standard conditions for this are (Theorem 5.3 in \cite{pazy}):
$A$ is closed, $D(A)$ is dense in $X$, and the resolvent set of $A$ contains the ray $(\omega,\infty)$ for some $\omega>0$. Additionally, there exists $M>0$ such that
$$
\|(\lambda I-A)^{-n}\|\leq \frac{M}{(\lambda-\omega)^n},\quad\quad
\forall \lambda\in (\omega,\infty),\, n\in\mathbb{N}.
$$
It should be noted that the L-shadowing property may hold or not for a given $C_0$-semigroup. To illustrate this, consider the following two examples.

\medskip

\begin{ex}
The \emph{trivial semigroup} (i.e., $T(t) = I$ for every $t \geq 0$) does not have the L-shadowing property. This can be concluded from the arguments presented in the proof of Theorem 2.3.2 in \cite{ah}.
On the other hand, the semigroup $e^{tA}$ generated by a bounded linear operator $A: X \to X$, where the spectrum of $A$ belongs to $\{z \in \mathbb{C} : \text{Im}(z) < 0\}$, does have the L-shadowing property. This can be deduced from Lemma \ref{federal} below and Theorem 4.3 p. 118 in \cite{pazy} (compare with the shadowing property case in Remark 3.4 of \cite{gd} or \cite{r}).
\end{ex}

\medskip

Based on this example we introduce the following auxiliary concept.

\medskip

\begin{definition}
The Cauchy problem \eqref{eq1} is said to have the L-shadowing property
if $A$ is the infinitesimal generator of a $C_0$-semigroup with the L-shadowing property.
\end{definition}

\medskip

Similarly we define the shadowing property for the Cauchy problem.
This definition raises the question of whether spectral properties on $A$ can be used to characterize when the corresponding Cauchy problem has the L-shadowing properties. We provide a partial answer using the following concepts.

Recall that an operator $B:X\to X$ on a Banach space $X$ is {\em hyperbolic} if its spectrum $\sigma(B)$ does not intersect the unit complex circle $S^1$. Drawing upon references such as \cite{bvr}, \cite{bt}, \cite{en}, or Definition 1 in \cite{ra}, we adopt the following definition.

\medskip

\begin{definition}
A semigroup of linear operators $T$ on a Banach space is {\em hyperbolic} if its time-one map $T(1)$ is a hyperbolic operator. We define the Cauchy problem \eqref{eq1} to be hyperbolic if $A$ generates a hyperbolic $C_0$-semigroup.
\end{definition}

\medskip

(Note the difference between this notion of hyperbolicity and that for evolution equations in Section 5.3 in \cite{pazy}.)

We are now in position to state our result. It gives the desired necessary and sufficient conditions for the Cauchy problem \eqref{eq1} to have both the shadowing and L-shadowing properties.

\medskip

\begin{thrm}
\label{thA}
If the Cauchy problem \eqref{eq1} in a Banach space $X$ is hyperbolic, then the problem has the L-shadowing property.
Conversely, if the problem has the L-shadowing property and $dim(X)<\infty$, then the problem is hyperbolic.
\end{thrm}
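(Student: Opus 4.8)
My plan is to treat the two implications separately, in both cases passing through the discrete time-one map $T(1)$.

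\emph{Hyperbolic $\Rightarrow$ L-shadowing.} First I would isolate a reduction lemma (this is Lemma \ref{federal} below): if the bounded operator $T(1)$ has the discrete L-shadowing property, then the $C_0$-semigroup $T$ has the L-shadowing property, in fact with reparametrization $h=\mathrm{id}$. Given $\epsilon$, put $M_0:=\sup_{0\le s\le 1}\|T(s)\|$, which is finite by strong continuity and the uniform boundedness principle; sampling a $(\delta,1)$-pseudo orbit at integer times produces the sequence $(x_0*n)_n$, which — since $t_i\ge 1$ permits at most one jump per unit interval — is a genuine $(M_0\delta)$-pseudo orbit of $T(1)$ whose errors still tend to $0$. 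The discrete L-shadowing property then gives a point $y$ with $\|T(n)y-x_0*n\|$ small and $\to 0$, and interpolating to all real $t$ is routine except just after a jump time $\hat{t}_i$, where one controls $\|T(\hat{t}_i)y-x_i\|\le M_0\epsilon'+\delta$ via the pre-jump estimate and the inequality $\|T(t_{i-1})x_{i-1}-x_i\|<\delta$. As every bound survives in the limit, the asymptotic conclusion transfers as well. It remains to show that a hyperbolic bounded operator $B=T(1)$ on a Banach space has the discrete L-shadowing property — essentially Ombach's argument \cite{o,o1}, now without invertibility of $B$.

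For that, use the Riesz decomposition at $S^1$ to split $X=X_s\oplus X_u$ into $T(t)$-invariant summands (the spectral projections commute with every $T(t)$, since $T(t)$ commutes with the resolvent of $T(1)$), with $\|(B|_{X_s})^n\|\le M\rho^n$ and — as $\sigma(B|_{X_u})$ avoids $0$ — $B|_{X_u}$ invertible with $\|(B|_{X_u})^{-n}\|\le M\rho^n$ for some $\rho\in(0,1)$. Given a $\delta$-pseudo orbit with errors $e_i$, look for a true orbit $z_i=y_i+w_i$ and solve $w_{i+1}=Bw_i-e_i$ componentwise: forward with zero initial datum on $X_s$ (a geometric series in $B|_{X_s}$) and backward on $X_u$ (a geometric series in $(B|_{X_u})^{-1}$). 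Summing these gives $\sup_i\|w_i\|\le C\delta$, and a split-the-sum tail estimate gives $w_i\to 0$ whenever $e_i\to 0$; choosing $\delta<\epsilon/C$ closes the discrete case, and the reduction lemma finishes the first implication.

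\emph{Converse.} I argue by contraposition: assume $\dim X<\infty$ and $T$ is not hyperbolic. On a finite-dimensional space $T(t)=e^{tA}$ with $A$ bounded and $\sigma(e^{tA})=e^{t\sigma(A)}$, so non-hyperbolicity of $T(1)$ forces $\sigma(A)\cap i\R\neq\emptyset$; fix $\mu=i\theta\in\sigma(A)$ with a unit eigenvector $v$, so $T(t)v=e^{i\theta t}v$. Fixing $\epsilon=1$ and assuming $\delta,R$ witness the L-shadowing property for it, I would exhibit a $(\delta,R)$-pseudo orbit with vanishing errors that no orbit can $\epsilon$-shadow: take $t_i\equiv\tau:=R$ and $x_i:=\tfrac{\delta}{2}\sqrt{i+1}\,e^{i\theta\tau i}v$, so the jumps equal $\tfrac{\delta}{2}(\sqrt{i+2}-\sqrt{i+1})\to 0$ while $x_0*t=\tfrac{\delta}{2}\sqrt{\lfloor t/\tau\rfloor+1}\,e^{i\theta t}v$ has norm $\to\infty$ inside the line $\C v$. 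Writing $X=E\oplus F$ with $E$ the generalized eigenspace of $A$ at $i\theta$ and $F$ the complementary $T$-invariant summand, for any candidate $z$ and reparametrization $h$ the asymptotic requirement becomes, after projecting onto $E$ and multiplying by $e^{-i\theta h(t)}$, the statement $e^{h(t)N}z_E-\rho(t)v\to 0$ with $N:=A|_E-i\theta$ nilpotent and $|\rho(t)|\to\infty$. Pairing with the dual-basis functionals $v_j^{*}$ ($j\ge 2$) of a basis of $E$ beginning with $v$, each $s\mapsto v_j^{*}(e^{sN}z_E)$ is a polynomial tending to $0$ as $s=h(t)\to\infty$; since $h$ maps onto a half-line, these polynomials vanish identically, so $z_E\in\C v$, whence $e^{h(t)N}z_E=z_E$ is constant while $\rho(t)v$ has norm $\to\infty$ — a contradiction. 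Therefore $T$ does not have the L-shadowing property.

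The step I expect to be the main obstacle is the reparametrization $h$ in the semiflow definition: it blocks a clean two-sided equivalence with $T(1)$ and is why the converse must be argued directly in continuous time. The way around it above is that only $h(t)\to\infty$ and the surjectivity of $h$ onto a half-line are actually used, together with the rigidity of linear orbits inside a generalized eigenspace of a unimodular point of $\sigma(A)$ — bounded in the eigenvector direction and growing at most polynomially transversally — which cannot track a pseudo orbit that drifts to infinity along the eigenvector.
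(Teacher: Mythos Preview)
Your argument is correct, but both halves proceed differently from the paper. For the forward implication, the paper never reduces to the discrete map $T(1)$: it splits $X=M\oplus N$ via hyperbolicity and proves continuous-time L-shadowing directly on each summand (Lemma~\ref{federal} for the contracting piece, Lemma~\ref{abe} for the expanding invertible piece), then recombines; in particular your reference to ``Lemma~\ref{federal}'' as a discrete-to-continuous reduction lemma is a mislabel, since that lemma is the contracting-semiflow estimate itself. Your sampling/interpolation route is equally valid and has the advantage of being modular---the passage from $T(1)$ to $T$ would work for other shadowing-type properties---at the price of some bookkeeping around jump times. For the converse, the paper uses Jordan form to find a $T$-invariant complement $N$ to the eigenline $M=\C v$, invokes Lemma~\ref{lanza} to transfer L-shadowing to the restriction $T|_M$, and then disposes of the one-dimensional rotation semiflow in one line (chain-transitive but not transitive). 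Your explicit $\sqrt{i+1}$-drifting pseudo orbit and the polynomial-vs-unbounded analysis inside the generalized eigenspace is more hands-on; it has the merit of confronting the reparametrization $h$ directly (only $h(t)\to\infty$ and surjectivity onto $[0,\infty)$ are used), whereas the paper's version is shorter but leaves the failure of L-shadowing for rotations as an unproved remark.
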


\medskip

A suitable remark is as follows.

\medskip

\begin{remark}
\label{walker}
It is worth to mention that the analogous result obtained by replacing L-shadowing by shadowing in the above statement holds too
(see \cite{g} for a possible reference about this fact).
\end{remark}

\medskip

The proof of the theorem also implies that all hyperbolic $C_0$-semigroups have the strong shadowing property (as defined in Komuro \cite{k}).
An interesting problem that arises from our investigation is whether there exists a relationship between the shadowing and L-shadowing properties in the context of the Cauchy problem. Specifically, we anticipate that every Cauchy problem with the shadowing property also possesses the L-shadowing property. This expectation is motivated by the observation that this implication holds true for linear homeomorphisms (see \cite{lm}).

This paper is divided as follows.
In Section \ref{sec2} we prove three lemmas.
In Section \ref{sec3} we prove the theorem and in Section \ref{sec4} we give some applications.

\section{Preliminary lemmas}
\label{sec2}

\noindent
In this section we prove three lemmas.
To motivate the first, we say that
a semiflow $T$ of a metric space $X$ is
{\em uniformly contracting} \cite{gd} if
there is $\lambda>0$ such that
$d(T(t)x,T(t)y)\leq e^{-t\lambda}d(x,y)$,
for all $x,y\in X,\, t\geq0.$
It was mentioned in Remark 3.4 of \cite{gd} that every uniformly contracting semiflow of a metric space has the shadowing property.
The following lemma deals with a slight more general class of semiflows:

\medskip

\begin{lem}
\label{federal}
Let $T$ be a semiflow of a complete metric space $X$. Suppose that there are
$K,\lambda>0$ such that
$$
d(T(t)x,T(t)y)\leq Ke^{-\lambda t}d(x,y),\quad\quad\forall x,y\in X, t\geq0.
$$
Then, for every $\epsilon>0$ there are $\delta,R>0$ such that
for every $(\delta,R)$-pseudo orbit $(x_i,t_i)_{i\geq0}$ satisfying
$$
\lim_{i\to\infty}d(T(t_i)x_i,x_{i+1})=0
$$
there is $x\in X$ such that
$$
d(T(t)x,x_0 * t)\leq\epsilon
\quad
(\forall t\geq0)\quad\mbox{ and }\quad
\lim_{t\to\infty}d(T(t)x,x_0 * t)=0.
$$
\end{lem}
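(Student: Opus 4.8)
The plan is to shadow each admissible pseudo-orbit by its own initial point, taking $x=x_0$; in particular no reparametrization $h$ and no appeal to completeness of $X$ will actually be needed, since the shadowing point is produced explicitly. Set $a_i=d(T(\hat t_i)x_0,x_i)$ and $\delta_i=d(T(t_i)x_i,x_{i+1})$, so that $a_0=0$, $\delta_i<\delta$ for all $i$, and $\delta_i\to0$ by the standing hypothesis on the pseudo-orbit. Because $\hat t_{i+1}=\hat t_i+t_i$, the semigroup law gives $T(\hat t_{i+1})x_0=T(t_i)\bigl(T(\hat t_i)x_0\bigr)$, and combining the contraction estimate with the triangle inequality and the pseudo-orbit condition yields the linear recursion
$$
a_{i+1}\le Ke^{-\lambda t_i}a_i+\delta_i\le \rho\, a_i+\delta_i,\qquad\text{where }\rho:=Ke^{-\lambda R},
$$
using $t_i\ge R$.

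First I would fix the constants: choose $R>0$ so large that $\rho=Ke^{-\lambda R}<1$, and then $\delta>0$ so small that $K\delta/(1-\rho)\le\epsilon$. Iterating the recursion from $a_0=0$ gives $a_i\le\sum_{j=0}^{i-1}\rho^{\,i-1-j}\delta_j\le\delta/(1-\rho)$ for all $i$. Then, for $t\in[\hat t_i,\hat t_{i+1})$ we have $x_0*t=T(t-\hat t_i)x_i$ and $T(t)x_0=T(t-\hat t_i)\bigl(T(\hat t_i)x_0\bigr)$, so the contraction estimate gives
$$
d\bigl(T(t)x_0,\,x_0*t\bigr)\le Ke^{-\lambda(t-\hat t_i)}a_i\le Ka_i\le \frac{K\delta}{1-\rho}\le\epsilon,
$$
which is the first required bound with $x=x_0$.

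For the asymptotic statement I would use that $\rho<1$ together with $\delta_i\to0$ forces $a_i\to0$: taking $\limsup$ in $a_{i+1}\le\rho a_i+\delta_i$ gives $\limsup_i a_i\le\rho\limsup_i a_i$, hence $\limsup_i a_i=0$. Since $\hat t_i\ge iR\to\infty$, as $t\to\infty$ the index $i(t)$ determined by $\hat t_{i(t)}\le t<\hat t_{i(t)+1}$ tends to infinity, and therefore $d(T(t)x_0,x_0*t)\le Ka_{i(t)}\to0$, completing the argument.

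I do not expect a genuine obstacle here. The only points requiring a little care are the order of the two-sided choice of constants ($R$ first, to make $\rho<1$, then $\delta$ to control the amplitude) and the elementary fact that a recursion $a_{i+1}\le\rho a_i+\delta_i$ with $0\le\rho<1$ and $\delta_i\to0$ satisfies $a_i\to0$; the real content is the geometric-sum bound on this recursion, everything else being a routine unwinding of the $x_0*t$ notation together with the semigroup law.
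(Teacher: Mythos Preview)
Your argument is correct and in fact more elementary than the paper's. The paper follows Ombach's template: it introduces the complete space $E=\{(y_i):d(x_i,y_i)\le\epsilon/K\}$ of sequences, defines a map $\Gamma$ on $E$ by $\Gamma(y)_0=x_0$, $\Gamma(y)_i=T(t_{i-1})y_{i-1}$, checks that $\Gamma$ is a $Ke^{-\lambda R}$-contraction, and invokes the Banach fixed-point theorem; the fixed point turns out to be $y_i=T(\hat t_i)x_0$, so the shadowing point is again $x=x_0$, and the asymptotic part is handled by showing $\Gamma$ preserves the closed subset $E_0=\{y\in E:d(y_i,x_i)\to0\}$. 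You bypass all of this by directly analysing the scalar recursion $a_{i+1}\le\rho a_i+\delta_i$, which gives both the uniform bound and the decay $a_i\to0$ with a one-line $\limsup$ argument. Your route is shorter, makes the completeness hypothesis visibly unnecessary (as you observed), and exposes the content of the lemma as a geometric-series estimate; the paper's contraction-mapping framework, on the other hand, is the classical shadowing-lemma template and would adapt more readily to situations where the shadowing point is not explicitly known in advance.
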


\begin{proof}
We will adapt the proof of Proposition 1 in Ombach \cite{o} to the semiflow case.
Let $K,\lambda>0$ be given by the fact that $T$ is eventually contractive.
Let $\epsilon>0$. Take $R>0$ such that
$Ke^{-\lambda R}<1$. Define
$$
\delta=(1-Ke^{-\lambda R})\frac{\epsilon}K.
$$
Let $(x_i,t_i)_{i\geq0}$ be a $(\delta,R)$-pseudo orbit namely
$$
t_i\geq R\quad\mbox{ and }\quad
d(T(t_i)x_i,x_{i+1})<\delta,\quad\forall i\geq0.
$$
Define
$$
E=\{y=(y_i)_{i\geq0}:y_i\in X\mbox{ and }d(x_i,y_i)\leq \frac{\epsilon}K,\, \forall i\geq0\}.
$$
Since $X$ is complete, $E$ also is if endowed with the supremum metric
$$
D(y,z)=\sup_{i\geq0}d(y_i,z_i).
$$
For any $y\in E$ we define the sequence
$\Gamma(y)=(\Gamma(y)_i)_{i\geq0}$ by

$$
\Gamma(y)_i = \left\{
\begin{array}{rcl}
x_0, & \mbox{if} & i=0\\
T(t_{i-1})y_{i-1}, & \mbox{if} & i>0.
\end{array}
\right.
$$
For any $y\in E$ one has
$d(x_0,\Gamma(y)_0)=d(x_0,x_0)=0\leq \frac{\epsilon}K$ and
for all $i\geq0$,
\begin{equation}
\label{common}
d(x_i,\Gamma(y)_i) = d(x_i,T(t_{i-1})y_{i-1})
\leq d(x_i,T(t_{i-1})x_{i-1})+
d(T(t_{i-1})x_{i-1},T(t_{i-1})y_{i-1}).
\end{equation}
This implies
$$
d(x_i,\Gamma(y)_i) 
\leq \delta+Ke^{-\lambda t_{i-1}}d(y_{i-1},x_{i-1})\\
\leq (1-Ke^{-\lambda R})\frac{\epsilon}K+Ke^{-\lambda R}\frac{\epsilon}K\\
= \frac{\epsilon}K, \quad\quad\forall i>0,
$$
thus $\Gamma(y)\in E$ for all $y\in E$.
So, we obtain a map
$\Gamma:E\to E$.

Since $Ke^{-\lambda R}<1$ and
\begin{eqnarray*}
D(\Gamma(y),\Gamma(z)) & = &
\sup_{i\geq0}d(\Gamma(y)_i,\Gamma(z)_i)\\
& = &
\sup_{i\geq1}d(T(t_{i-1})y_{i-1},T(t_{i-1})z_{i-1})\\
& \leq & Ke^{-\lambda t_{i-1}} \sup_{i\geq1}d(y_{i-1},z_{i-1})\\
& \leq & Ke^{-\lambda R}D(y,z)
\end{eqnarray*}
for all $y,z\in E$, we have that $\Gamma:E\to E$ is a contraction. Since $E$ is a complete metric space,
the Banach contracting principle implies that
$\Gamma$ has a fixed point i.e. there is
$y\in \Gamma$ such that
$\Gamma(y)=y$.
It follows that
\begin{equation}
\label{dady}
y_0=x_0,\quad
d(y_i,x_i)\leq \frac{\epsilon}K\quad\mbox{ and }\quad
T(t_{i-1})y_{i-1}=y_i,\quad\forall i\geq1.
\end{equation}
The first and third of the above expressions imply
$$
y_i=T(\hat{t}_i)x_0,\quad\quad\forall i\geq0.
$$
By replacing in the second we obtain
$$
d(T(\hat{t}_i)x_0,x_i)\leq\frac{\epsilon}K,\quad\quad\forall i\geq1.
$$
Then,
\begin{equation}
\label{esquisito}
d(T(t)x_0,T(t-\hat{t}_i)x_i)\leq Ke^{-\lambda (t-t_i)}d(T(\hat{t}_i)x_0,x_i)\leq K\frac{\epsilon}K=\epsilon
\end{equation}
for all $i\geq0$ and $\hat{t}_i\leq t<\hat{t}_{i+1}$
so
$$
d(T(t)x_0,x_0*t)\leq \epsilon,\quad\quad\forall t\geq0.
$$
Define
$$
E_0=\{y\in E:\lim_{i\to\infty}d(y_i,x_i)=0\}.
$$
It follows that $E_0$ is a closed subspace of $E$.
Applying \eqref{common} we obtain
$$
\lim_{i\to\infty}d(x_i,\Gamma(y)_i)
\leq \lim_{i\to\infty}[d(x_i,T(t_{i-1})x_{i-1})+d(x_{i-1},y_{i-1})]=0,
\quad\forall y\in E_0,\, i\geq 0.
$$
It follows that $\Gamma(E_0)\subset E_0$.
Since $E_0$ is closed, the fixed point $y$ in \eqref{dady} belongs to $E_0$.
Then, \eqref{esquisito}
implies
$$
\lim_{t\to\infty}d(T(t)x_0,x_0*t)=0
$$
completing the proof.
\end{proof}

The second lemma is inspired by Proposition 3 in Ombach \cite{o}.

\medskip

\begin{lem}
\label{abe}
Let $T$ be a semigroup of a Banach space $X$
such that $T(t)$ is invertible for all $t\geq0$.
Suppose that there are $\lambda,K>0$ such that
$$
\|(T(t))^{-1}\|\leq Ke^{-t \lambda},\quad\quad\forall t\geq0.
$$
Then, for every $\epsilon>0$ there is $\delta>0$
such that for every $(\delta,1)$-pseudo orbit $(x_i,t_i)_{i\geq0}$
satisfying
$$
\lim_{i\to\infty}\|T(t_i)x_i-x_{i+1}\|=0,
$$
there is $x\in X$ such that
$$
\|T(t)x-x_0*t\|\leq\epsilon\quad(\forall t\geq0)\quad\mbox{ and }\quad
\lim_{t\to\infty}\|T(t)x-x_0 * t\|=0.
$$
\end{lem}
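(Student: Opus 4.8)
The plan is to adapt the fixed point argument of Lemma~\ref{federal} to the situation dual to it. The hypothesis says exactly that the family of inverses $S(t):=T(t)^{-1}$ contracts, $\|S(t)(u-v)\|\le Ke^{-\lambda t}\|u-v\|$ for all $u,v\in X,\ t\ge0$, and $\{S(t)\}_{t\ge0}$ is again a semigroup since each $T(t)$ is invertible, so $T$ extends to a group and $S(s+t)=S(s)S(t)$. Accordingly the shadowing point must be built by pulling the pseudo orbit \emph{backwards} rather than pushing it forwards. Fix $\epsilon>0$, put $c:=K^2e^{-\lambda}/(1-e^{-\lambda})$, and set $\delta:=\epsilon/(K(c+1))$.

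Given a $(\delta,1)$-pseudo orbit $(x_i,t_i)_{i\ge0}$ with $\eta_i:=\|T(t_i)x_i-x_{i+1}\|\to0$ (so $\eta_i<\delta$), I would work on the complete metric space $Y=\{y=(y_i)_{i\ge0}:y_i\in X,\ \sup_i\|y_i-x_i\|<\infty\}$ with metric $D(y,z)=\sup_i\|y_i-z_i\|$ and define $\Gamma\colon Y\to Y$ by $\Gamma(y)_i=S(t_i)y_{i+1}$; that $\Gamma$ lands in $Y$ follows from $\|S(t_i)x_{i+1}-x_i\|=\|S(t_i)(x_{i+1}-T(t_i)x_i)\|\le Ke^{-\lambda}\delta$. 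Here is the point where this differs from Lemma~\ref{federal}: $R$ is now pinned to $1$, so $Ke^{-\lambda}$ need not be less than $1$, $\Gamma$ need not be a contraction, and it need not preserve a ball of fixed radius around $(x_i)$. The remedy is to pass to iterates: since $t_j\ge1$ we have $\Gamma^N(y)_i=S(\hat t_{i+N}-\hat t_i)y_{i+N}$ with $\hat t_{i+N}-\hat t_i\ge N$, hence $D(\Gamma^Ny,\Gamma^Nz)\le Ke^{-\lambda N}D(y,z)$, which is a contraction for $N$ large, so $\Gamma$ has a unique fixed point $y^*\in Y$. Rewriting $y^*_i=S(t_i)y^*_{i+1}$ as $y^*_{i+1}=T(t_i)y^*_i$ gives $y^*_i=T(\hat t_i)y^*_0$ for all $i$; set $x:=y^*_0$.

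The quantitative estimates are then extracted a posteriori. With $M:=\sup_i\|y^*_i-x_i\|<\infty$, apply $y^*=\Gamma^N(y^*)$ and expand $S(\hat t_{i+N}-\hat t_i)x_{i+N}-x_i$ as a telescoping sum of the terms $S(\hat t_j-\hat t_i)(S(t_j)x_{j+1}-x_j)$; bounding each by $K^2e^{-\lambda(\hat t_{j+1}-\hat t_i)}\eta_j\le K^2e^{-\lambda(j+1-i)}\eta_j$ and letting $N\to\infty$ yields $\|y^*_i-x_i\|\le K^2\sum_{m\ge1}e^{-\lambda m}\eta_{i+m-1}\le c\delta$ for every $i$. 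For $t\in[\hat t_i,\hat t_{i+1})$ one must avoid the uncontrolled norm $\|T(\cdot)\|$: instead write $T(t)x=S(\hat t_{i+1}-t)y^*_{i+1}$ and $x_0*t=T(t-\hat t_i)x_i=S(\hat t_{i+1}-t)T(t_i)x_i$, so that $\|S(\hat t_{i+1}-t)\|\le K$ gives
$$
\|T(t)x-x_0*t\|\le K\big(\|y^*_{i+1}-x_{i+1}\|+\eta_i\big)\le K(c\delta+\delta)=\epsilon .
$$

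For the limit assertion, $\eta_i\to0$ forces $\sum_{m\ge1}e^{-\lambda m}\eta_{i+m-1}\to0$ as $i\to\infty$ (dominated convergence), hence $\|y^*_{i+1}-x_{i+1}\|\to0$; feeding this and $\eta_i\to0$ into the last display, and noting $\hat t_i\to\infty$, gives $\|T(t)x-x_0*t\|\to0$ as $t\to\infty$. The main obstacle, as indicated above, is that one cannot arrange $Ke^{-\lambda}<1$ when $R=1$, so the clean ``$\Gamma$-invariant ball'' mechanism of Lemma~\ref{federal} is unavailable; this is handled by running Banach's principle for $\Gamma^N$ on the full affine space $Y$ and recovering the shadowing constant afterwards, while being careful to use only the bound $\|S(s)\|=\|T(s)^{-1}\|\le K$ and never $\|T(s)\|$.
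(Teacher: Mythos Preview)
Your argument is correct, and it takes a genuinely different route from the paper's proof. The paper, following Ombach's Proposition~3, writes the shadowing point down \emph{explicitly}: setting $h_i=x_{i+1}-T(t_i)x_i$, one checks by induction that $x_i=T(\hat t_i)\bigl[x_0+\sum_{k=1}^i T(\hat t_k)^{-1}h_{k-1}\bigr]$, defines $x=x_0+\sum_{k\ge1}T(\hat t_k)^{-1}h_{k-1}$ as a convergent series, and then computes $T(t)x-x_0*t=\sum_{k\ge i+1}T(\hat t_k-t)^{-1}h_{k-1}$ to read off both the $\epsilon$-bound and the limit assertion directly from the tail. Your fixed-point argument on the affine $\ell^\infty$-space produces precisely the same point (indeed $(T(\hat t_i)x)_{i\ge0}$ is your $y^*$), but abstracts away the series bookkeeping and makes the duality with Lemma~\ref{federal} transparent; the price is the extra step of passing to $\Gamma^N$ to force a contraction when $Ke^{-\lambda}\ge1$, and then recovering the quantitative bound a posteriori via the telescoping sum. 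Both proofs share the key observation that for $t\in[\hat t_i,\hat t_{i+1})$ one must express the error through $T(\hat t_{i+1}-t)^{-1}$ (your $S(\hat t_{i+1}-t)$) rather than $T(t-\hat t_i)$, so that only the hypothesis $\|T(s)^{-1}\|\le K$ is invoked and never the uncontrolled $\|T(s)\|$. The paper's approach is slightly shorter and gives an explicit formula for $x$; yours is more structural and would generalize unchanged to any setting where the backward map contracts eventually.
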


\begin{proof}
Fix $\epsilon>0$.
Take $\delta>0$ such that
\begin{equation}
\label{coker}
K\cdot\left(\sum_{i=0}^\infty(e^{-\lambda})^k\right)\cdot\delta<\epsilon.
\end{equation}

Let $(x_i,t_i)_{i\geq0}$ be a $(\delta,1)$-pseudo orbit namely
$$
t_i\geq 1\quad\mbox{ and }\quad
\|T(t_i)x_i-x_{i+1}\|\leq\delta,\quad\forall i\geq0.
$$
It follows that
\begin{equation}
\label{negan}
x_{i+1}=T(t_i)x_i+h_i,\quad\quad\forall i\geq0,
\end{equation}
for some sequence $(h_i)_{i\geq0}$ satisfying
$\|h_i\|\leq\delta$ for all $i\geq0$.
By induction,
$$
x_i=\left(\prod_{j=0}^{i-1}T(t_j)\right)x_0+\sum_{k=1}^i\left(
\prod_{j=k}^{i-1}T(t_j)\right)h_{k-1},
\quad\quad\forall i\geq0
$$
(here we convey that $\sum_a^b=I$ when $a>b$).
Then,
$$
x_i=T(\hat{t}_i)[x_0+\sum_{k=1}^i(T(\hat{t}_k)^{-1})h_{k-1}],
\quad\quad\forall i\geq0.
$$
Since
$$
\|(T(\hat{t}_k)^{-1})h_{k-1}\|\leq \|(T(\hat{t}_k))^{-1}\|\|h_{k-1}\|
\leq Ke^{-\lambda \hat{t}_k}\delta
= Ke^{-\lambda\sum_{j=0}^{k-1}t_j}\delta
 \leq  K(e^{-\lambda})^k\delta
$$
and $e^{-\lambda}<1$, the series
$$
x=x_0+\sum_{k=1}^\infty(T(\hat{t}_k)^{-1})h_{k-1}
$$
is convergent.
Now, fix $i\geq0$ and $\hat{t}_i\leq t<\hat{t}_{i+1}$.
Then,
\begin{eqnarray*}
T(t)x-T(t-\hat{t}_i)x_i&=&T(t)x_0+T(t)\sum_{k=1}^\infty
(T(\hat{t}_k)^{-1})h_{k-1}-T(t)
\left[x_0+\sum_{k=1}^i(T(\hat{t}_k)^{-1})h_{k-1}\right]\\
&=& T(t)
\sum_{k=i+1}^\infty
(T(\hat{t}_k)^{-1})
h_{k-1}\\
& = & \sum_{k=i+1}^\infty (T(\hat{t}_k-t))^{-1}h_{k-1}
\end{eqnarray*}
so
\begin{equation}
\label{atorney}
\|T(t)x-T(t-\hat{t}_i)x_i\|
\leq \left(\sum_{k=i+1}^\infty \|(T(\hat{t}_k-t))^{-1}\|\right)\cdot \sup_{k\geq i}\|h_{k}\|
\end{equation}
Since
\begin{eqnarray*}
\sum_{k=i+1}^\infty\|(T(\hat{t}_k-t)^{-1}\| &\leq& \sum_{k=i+1}^\infty Ke^{-\lambda (\hat{t}_k-t)}\\
&<& K\sum_{k=i+1}^\infty e^{-\lambda(\hat{t}_k-\hat{t}_{i+1})}\\
& \leq & K\sum_{k=i+1}^\infty e^{-\lambda(k-i-1)}\\
& = & K\sum_{k=0}^\infty (e^{-\lambda})^k,
\end{eqnarray*}
\eqref{atorney} implies
$$
\|T(t)x-T(t-\hat{t}_i)x_i\|
\leq K\cdot \left(\sum_{k=0}^\infty (e^{-\lambda})^k\right)\cdot \delta\overset{\eqref{coker}}{<}\epsilon.
$$
This proves
$$
\|T(t)x-x_0*t\|<\epsilon,\quad\quad\forall t\geq0.
$$
Since
$$
\lim_{i\to\infty}\|T(t_i)x_i-x_{i+1}\|=0,
$$
the sequence $(h_i)_{i\geq0}$ in
\eqref{negan} satisfies
$$
\lim_{i\to\infty}h_i=0.
$$
Fix $\Delta>0$ and $i_\Delta>0$ large so that
$$
K\cdot\left(\sum_{k=0}^\infty (e^{-\lambda)})^k\right)\cdot \sup_{k\geq i}\|h_k\|<\Delta,\quad\quad\forall i\geq i_\Delta.
$$
Then, \eqref{atorney} implies $
\|T(t)x-T(t-\hat{t}_i)x_i\| <\Delta$ for all $i\geq i_\Delta$ and $\hat{t}_i\leq t<\hat{t}_{i+1}$
hence
$$
\lim_{t\to\infty}\|T(t)x-x_0*t\|=0
$$
completing the proof.
\end{proof}

\medskip

The third lemma is a direct consequence of the definition. Its proof is included for completeness.

\medskip

\begin{lem}
\label{lanza}
Let $T$ be a semigroup of linear operators of a Banach space $X$.
If $T$ has the L-shadowing property, and there is a direct sum $X=M\oplus N$ by closed subspaces $M, N$ such that
$T(t)M\subset M$ and $T(t)N\subset N$ for all $t\geq0$, then the restricted semigroups
$T|_M$ and $T|_N$ have the L-shadowing property.
\end{lem}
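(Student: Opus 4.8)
\noindent
The plan is to carry pseudo-orbits and shadowing points back and forth between $M$ and $X$ by means of the projection associated with the splitting $X=M\oplus N$, exploiting that this projection is bounded and commutes with the semigroup. By symmetry it suffices to treat $T|_M$, which is itself a semigroup of linear operators of the Banach space $M$ since $T(t)M\subset M$ and $M$ carries the restricted norm. Let $P\colon X\to M$ be the linear projection onto $M$ along $N$. Since $M$ and $N$ are closed and $X=M\oplus N$ algebraically, $P$ is bounded: if $z_k\to z$ and $Pz_k\to m$, then $m\in M$ because $M$ is closed and $z-m=\lim_k(z_k-Pz_k)\in N$ because $N$ is closed, so $Pz=m$, and the closed graph theorem applies. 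Moreover, writing $z=Pz+(z-Pz)$ with $Pz\in M$ and $z-Pz\in N$ and applying $T(t)$, the invariances $T(t)M\subset M$ and $T(t)N\subset N$ give $P(T(t)z)=T(t)(Pz)$, i.e. $P\circ T(t)=T(t)\circ P$ for every $t\geq0$. If $M=\{0\}$ there is nothing to prove, so we may assume $\|P\|\geq1$.

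Now fix $\e>0$ and apply the L-shadowing property of $T$ with accuracy $\e/\|P\|$ to obtain $\delta,R>0$. Let $(x_i,t_i)_{i\geq0}$ be a $(\delta,R)$-pseudo orbit of $T|_M$ with $\lim_{i\to\infty}\|T(t_i)x_i-x_{i+1}\|=0$. Because the metric on $M$ is induced by $\|\cdot\|$ and $T(t_i)x_i\in M$, this same sequence is a $(\delta,R)$-pseudo orbit of $T$ on $X$ obeying the same limit condition. Hence there exist $x\in X$ and an increasing homeomorphism $h\colon[0,\infty)\to[0,\infty)$ with $\|T(h(t))x-x_0*t\|<\e/\|P\|$ for all $t\geq0$ and $\lim_{t\to\infty}\|T(t)x-x_0*t\|=0$.

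To finish, project the shadowing point back to $M$. Since every $x_i$ lies in $M$ and $T(s)M\subset M$, the curve $t\mapsto x_0*t$ takes values in $M$, so $P(x_0*t)=x_0*t$. Set $x'=Px\in M$. Using boundedness, linearity and the commutation relation from the first paragraph,
$$
\|T(h(t))x'-x_0*t\|=\|P\bigl(T(h(t))x-x_0*t\bigr)\|\leq\|P\|\cdot\|T(h(t))x-x_0*t\|<\e
$$
for every $t\geq0$, and likewise $\|T(t)x'-x_0*t\|\leq\|P\|\cdot\|T(t)x-x_0*t\|\to0$ as $t\to\infty$. Thus $x'\in M$, together with the same $h$, witnesses L-shadowing of the given pseudo-orbit inside $M$, so $T|_M$ has the L-shadowing property; replacing $P$ by the projection onto $N$ along $M$ yields the claim for $T|_N$.

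I do not expect a genuine obstacle here, which is consistent with the remark that the lemma is a direct consequence of the definition. The only points that require an argument are the boundedness of the projection (the closed graph theorem, using closedness of $M$ and $N$) and its commutation with each $T(t)$ (from the invariances), both routine; one also uses the harmless observation that the reparametrizing homeomorphism $h$ furnished for $X$ can be reused verbatim for $M$, since projecting affects neither the norm estimates nor the time variable.
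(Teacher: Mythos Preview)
Your proof is correct and follows essentially the same route as the paper's: view a pseudo-orbit in $M$ as a pseudo-orbit in $X$, L-shadow it there, and project the shadowing point back to $M$ via the bounded projection that commutes with each $T(t)$. The only cosmetic difference is that the paper renorms $X$ to the equivalent norm $\|x\|=\max\{\|x^M\|,\|x^N\|\}$ (so the projection has norm $1$), whereas you keep the original norm and carry the factor $\|P\|$ explicitly; both amount to the same argument.
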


\begin{proof}
Denote by $T^M=T|_M$ and $T^N|_N$ the restrictions of $T$ to $M$ and $N$ respectively.
We only need to prove that $T^M$ has the shadowing property.
By passing to projections if necessary we can assume
\begin{equation}
\label{anis}
\|x\|=\max\{\|x^M\|,\|x^N\|\}\quad\mbox{ whenever }\quad x=x^M\oplus x^N\in M\oplus N.
\end{equation}
Let $\epsilon>0$ and $\delta,R>0$ be given by the L-shadowing property of $T$.
Let $(x^M_i,t_i)_{i\geq0}$ be a $(\delta,R)$-pseudo orbit of $T^M$ satisfying
$\lim_{i\to\infty}\|T^M(t_i)x_i-x_{i+1}\|=0$.
It follows from \eqref{anis} that
it is also a $(\delta,R)$-pseudo orbit of $T$ satisfying
$\lim_{i\to\infty}\|T(t_i)x_i-x_{i+1}\|=0$.
Then, there are $x\in X$ and an increasing homeomorphism $h:[0,\infty)\to [0,\infty)$ such that
\begin{equation}
\label{daryl}
\|T(h(t))x-x^M_0*t\|\leq \epsilon\quad\quad(\forall t\geq0)
\quad\mbox{ and }\quad
\lim_{t\to\infty}\|T(h(t))x-x^M_0*t\|=0.
\end{equation}
Writing $x=x^M+x^N$ one has
\begin{eqnarray*}
\|T^M(h(t))x^M-T^M(t-\hat{t}_i)x^M_i\|&\leq& \max\{
\|T^M(h(t))x^M-T^M(t-\hat{t}_i)x^M_i\|,\\
& &
\|T^N(h(t))x^N\|\}\\
&\overset{\eqref{anis}}{=}& \|T^M(h(t))x^M-T^M(t-\hat{t}_i)x^M_i+T^N(h(t))x^N\|
\end{eqnarray*}
for all $i\geq0$ and $\hat{t}_i\leq t<\hat{t}_{i+1}$ proving
$$
\|T^M(h(t))x^M-x^M_0*t\|\leq \|T(h(t))x-x^M_0*t\|,\quad\quad\forall t\geq0.
$$
Then, \eqref{daryl} implies
$$
\|T^M(h(t))x^M-x^M_0*t\|\leq \epsilon\quad\quad(\forall t\geq0)
\quad\mbox{ and }\quad
\lim_{t\to\infty}\|T^M(h(t))x^M-x^M_0*t\|=0
$$
completing the proof.
\end{proof}

\section{Proof of the theorem}
\label{sec3}

\noindent
First we assume that the Cauchy problem \eqref{eq1} is hyperbolic.
That is, $A$ generates a $C_0$-semigroup $T$ on $X$ with $\sigma(T(1))\cap S^1=\emptyset$.
Then, by Proposition 1.15 p. 305 in \cite{en},
there are a direct sum $X=M\oplus N$ by closed subspaces $M,N$ and numbers
$\lambda_M,\lambda_N,K_M,K_N>0$ such that
$T(t)M= M$, $T(t)N=N$, $T(t)|_N:N\to N$ is invertible (for all $t\geq0$),
\begin{equation}
\label{left}
\|T(t)|_M\|\leq K_Me^{-t\lambda_M}
\quad\mbox{ and }\quad
\|(T(t)|_N)^{-1}\|\leq K_Ne^{-t\lambda_N}
\quad\quad\forall t\geq0.
\end{equation}
Fix $\epsilon>0$.
By \eqref{left} we can apply lemmas \ref{abe} and \ref{lanza} to get
$\delta_M,R_M>0$ and
$\delta_N>0$ satisfying the conclusion of these lemmas for the restrictions $T^M=T|_M$ and $T^N=T|_N$ respectively.
Take
$$
\delta=\min\{\delta_M,\delta_N\}\quad\mbox{ and }\quad
R=\max\{R_M,1\}.
$$

Let $(x_i,t_i)_{i\geq0}$ be a $(\delta,R)$-pseudo orbit 
of $T$ satisfying
\begin{equation}
\label{joe}
\lim_{i\to\infty}\|T(t_i)x_i-x_{i+1}\|=0,
\end{equation}
For all $i\geq0$ one has
$x_i=x^M_i+x^N_i\in M\oplus N$.
By \eqref{anis} one has
$$
\max\{\|T^M(t_i)x^M_i-x^M_{i+1}\|,\|T^N(t_i)x^N_i-x^N_{i+1}\|\}=\|T(t_i)x_i-x_{i+1}\|\leq\delta,\quad\quad\forall i\geq0.
$$
So,
$(x^M_i,t_i)_{i\geq0}$ is a $(\delta_M,R_M)$-pseudo orbit of $T^M$ and $(x^N_i,t_i)_{i\geq0}$ is a $(\delta_N,1)$-pseudo orbit of $T^N$.
Thus, \eqref{joe} together with lemmas \ref{abe} and \ref{lanza} provide $x^M\in M$ and $x^N\in N$ such that
$$
\|T^*(t)x^*-T^*(t-\hat{t}_i)x^*_i\|\leq \epsilon\quad\quad \forall *\in\{M,N\},\,  i\geq0,\, \hat{t}_i\leq t<\hat{t}_{i+1},
$$
and
$$
\lim_{i\to\infty}\|T^*(t_i)x^*_i-x^*_{i+1}\|=0,\quad\quad\forall *\in \{M,N\}.
$$
Therefore, $x=x^M+x^N\in X$ and $h:[0,\infty)\to[0,\infty)$ defined by $h(t)=t$ for $t\geq0$ satisfy
\begin{eqnarray*}
\|T(h(t))x-T(t-\hat{t}_i)x_i\|
&=&\max\{\|T^M(t)x^M-T^M(t-\hat{t}_i)x^M_i\|,\\
& &
\|T^N(t)x^N-T^N(t-\hat{t}_i)x^N\|\}\\
&\leq& \epsilon,
\quad\forall i\geq0,\, \hat{t}_i\leq t<\hat{t}_{i+1},
\end{eqnarray*}
proving $\|T(t)x-x_0*t\|\leq\epsilon$ for all $t\geq0$ and
$$
\lim_{t\to\infty}\|T^*(t)x^*-x_0^* *t\|=0,\quad\quad\forall *\in \{M,N\}
$$
so
$$
\lim_{t\to\infty}\|T(h(t))x-x_0*t\|=\lim_{t\to\infty}\max\{\|T^M(t)x^M-x^M_0*t\|,\|T^N(t)x^N-x_0^N*t\|\}=0
$$
proving that \eqref{eq1} has the L-shadowing property.

Now, we suppose $dim(X)<\infty$ and that \eqref{eq1} has the L-shadowing property.
In such a case, the semigroup $T$ is {\em uniformly continuous} i.e.
$$
\lim_{t\to 0^+}\|T(t)-I\|=0
$$
and so $T(t)=e^{tA}$ for all $t\geq0$ (see \cite{en}).

Suppose by contradiction that \eqref{eq1} is not hyperbolic.
Then,
$\sigma(T(1))\cap S^1\neq\emptyset$ namely there are $x_0\in X$ with $\|x_0\|=1$ and $\lambda\in S^1$ such that
$Ax_0=\lambda x_0$.
Let $M$ be the subspace of $X$ generated by $x_0$.
It follows easily that $Ax=\lambda x$ for all $x\in M$.
Since $T(t)=e^{tA}$ one has that
\begin{equation}
\label{futil}
T(t)x=e^{t\theta i}x,\quad\quad\forall t\geq0, x\in M
\end{equation}
where $\theta=Im(\lambda)$ and $i=\sqrt{-1}$.

Note that this semiflow (in fact, a flow) does not have the L-shadowing property.
This follows from the fact that it is chain transitive (i.e., we can go from one point to another through pseudo orbits) but not transitive (since the orbits are confined to circles).
However, since $dim(X)<\infty$, we can apply the Jordan form to prove that $M$ has a complementary subspace $N$ (i.e. $X=M\oplus N$)
such that $A(N)\subset N$.
This implies
$T(t)M\subset M$ and $T(t)N\subset N$ for all $t\geq0$.
Then, $T|_M$ has the L-shadowing property by Lemma \ref{lanza} which is absurd since \eqref{futil} implies that
$T|_M$ is the semiflow $e^{t\theta i}$.
This completes the proof of the theorem.

\section{Applications}
\label{sec4}

\noindent
Let us give two short applications of the theorem. The first is a spectral sufficient condition
for the Cauchy problem on Hilbert spaces to have the shadowing properties under consideration.

\medskip

\begin{cor}
\label{corina}
If the Cauchy problem \eqref{eq1} in an Hilbert space satisfies
$$
\sigma(A)\cap (i\mathbb{R})=\emptyset\quad\mbox{ and }\quad
\sup_{\lambda\in i\mathbb{R}}\|(\lambda I-A)^{-1}\|<\infty,
$$
then the problem has the L-shadowing property.
\end{cor}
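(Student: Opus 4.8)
The plan is to reduce the corollary to the first half of Theorem \ref{thA} by checking that the two spectral hypotheses force the $C_0$-semigroup generated by $A$ to be hyperbolic. Write $T$ for the $C_0$-semigroup with generator $A$ (which exists by the standing assumption on \eqref{eq1}). I want to verify that $\sigma(T(1))\cap S^1=\emptyset$; once that is in hand, the Cauchy problem \eqref{eq1} is hyperbolic in the sense of the definition preceding Theorem \ref{thA}, and Theorem \ref{thA} then immediately yields the L-shadowing property. So the whole proof consists of one genuine step plus an appeal to the theorem.

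The genuine step is supplied by the Hilbert-space characterization of hyperbolic $C_0$-semigroups, i.e. the hyperbolic form of Gearhart's theorem (see e.g. \cite{en}): a $C_0$-semigroup on a Hilbert space is hyperbolic if and only if its generator $A$ satisfies $i\mathbb{R}\subseteq\rho(A)$ together with $\sup_{s\in\mathbb{R}}\|(isI-A)^{-1}\|<\infty$. The hypotheses of the corollary are exactly these two conditions rephrased: $\sigma(A)\cap(i\mathbb{R})=\emptyset$ is the same as $i\mathbb{R}\subseteq\rho(A)$, and $\sup_{\lambda\in i\mathbb{R}}\|(\lambda I-A)^{-1}\|<\infty$ is the uniform resolvent bound along the imaginary axis. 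Hence the cited theorem gives that $T$ is hyperbolic, and the argument concludes by invoking Theorem \ref{thA}.

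The main obstacle is precisely this spectral characterization, which is a genuinely Hilbert-space phenomenon: on a general Banach space the two conditions are necessary but not sufficient for hyperbolicity, and the sufficiency is proved via Plancherel's theorem for $H$-valued $L^2$-functions. Since this is a standard result, I would cite it rather than reproduce its proof. The one point requiring care is to quote the \emph{hyperbolic} version — the one characterizing $\sigma(T(1))\cap S^1=\emptyset$ — and not merely the exponential-stability statement (the classical Gearhart–Prüss theorem), because the corollary assumes neither smallness of the resolvent bound nor that $\sigma(A)$ is confined to a half-plane; it only separates the spectrum from the imaginary axis, which is what hyperbolicity, not stability, requires.
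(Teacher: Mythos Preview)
Your proposal is correct and follows essentially the same route as the paper: the paper's proof is the one-line observation that the hypotheses, via the Hilbert-space characterization of hyperbolic $C_0$-semigroups (Theorem 1.18, p.~307 in \cite{en}, i.e.\ the hyperbolic Gearhart theorem you invoke), force $T$ to be hyperbolic, after which Theorem \ref{thA} applies. Your added remark distinguishing the hyperbolic version from the exponential-stability version is a useful clarification but does not depart from the paper's argument.
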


\begin{proof}
The proof follows from theorem and Theorem 1.18 p. 307 in \cite{en}.
\end{proof}

We can also obtain some dynamical conclusion from the shadowing properties.
More precisely, we say that $x\in X$ is a {\em nonwandering point} of the Cauchy problem \eqref{eq1} if
for all neighborhood $U$ of $x$ and $R>0$ there are $y\in U$ and $t\geq R$ such that
$T(t)y\in U$, where $T$ is the semigroup generated by $A$.\\

\begin{prop}
\label{calor}
If the Cauchy problem given by equation \eqref{eq1} is hyperbolic, then $0$ is the only nonwandering point of the problem.
\end{prop}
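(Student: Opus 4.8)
The plan is to reduce the statement to the hyperbolic splitting already exploited in the proof of the theorem and then run two independent dichotomy estimates, one contracting and one expanding, on the two summands.

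First I would record the setup. Since \eqref{eq1} is hyperbolic, $A$ generates a $C_0$-semigroup $T$ with $\sigma(T(1))\cap S^1=\emptyset$, so Proposition 1.15 p.~305 in \cite{en} yields a $T$-invariant decomposition $X=M\oplus N$ into closed subspaces with $T(t)|_N$ invertible for all $t\geq0$ and constants $K_M,K_N,\lambda_M,\lambda_N>0$ satisfying \eqref{left}. As in \eqref{anis} I may assume $\|x\|=\max\{\|x^M\|,\|x^N\|\}$ for $x=x^M\oplus x^N$, and from the second estimate of \eqref{left} I extract the expansion bound $\|T(t)|_N z\|\geq K_N^{-1}e^{t\lambda_N}\|z\|$ for all $z\in N$, $t\geq0$. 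One direction is trivial: $0$ is nonwandering because $T(t)0=0$ lies in every neighborhood of $0$ for every $t$. So the content is the reverse implication.

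Let $x=x^M+x^N$ be a nonwandering point; I want to conclude $x=0$. Suppose first $x^N\neq0$ and set $U=B(x,r)$ with $0<r<\|x^N\|/2$. Every $y=y^M+y^N\in U$ satisfies $\|y^N\|>\|x^N\|-r>\|x^N\|/2$, hence $\|(T(t)y)^N\|=\|T(t)|_N y^N\|\geq K_N^{-1}e^{t\lambda_N}\|x^N\|/2$, which exceeds $\|x^N\|+r$ for all $t\geq R$, where $R$ depends only on $r,\|x^N\|,K_N,\lambda_N$ and not on $y$. For such $t$ the reverse triangle inequality gives $\|T(t)y-x\|\geq\|(T(t)y)^N\|-\|x^N\|>r$, so $T(t)y\notin U$, contradicting the nonwandering property for this $U$ and this $R$. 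Thus $x^N=0$. The same scheme with the first estimate of \eqref{left} disposes of the other component: if $x^M\neq0$, take $U=B(x,r)$ with $0<r<\|x^M\|/2$; for $y\in U$ one has $\|(T(t)y)^M\|=\|T(t)|_M y^M\|\leq K_M e^{-t\lambda_M}(\|x^M\|+r)$, which drops below $\|x^M\|-r$ for all large $t$, uniformly in $y\in U$, whence $\|T(t)y-x\|\geq\|x^M\|-\|(T(t)y)^M\|>r$ and again $T(t)y\notin U$, a contradiction. Hence $x^M=0$ as well, so $x=0$.

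I do not anticipate a genuine obstacle here; this is essentially the semigroup counterpart of the fact that a hyperbolic linear homeomorphism has $0$ as its only nonwandering point. The one point deserving attention is that the threshold time $R$ in each case must be chosen uniformly over $y\in U$, so that a single pair $(U,R)$ defeats the nonwandering condition. This is automatic because the bound used for $(T(t)y)^N$ (resp.\ $(T(t)y)^M$) depends on $y$ only through the inequality $\|y^N\|\geq\|x^N\|-r$ (resp.\ $\|y^M\|\leq\|x^M\|+r$), which holds for every point of the ball.
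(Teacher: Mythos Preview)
Your argument is correct, and it takes a genuinely different route from the paper. The paper's proof passes through the shadowing property: given a nonwandering point $x$ and $\epsilon>0$, it builds a periodic $(\delta,R)$-pseudo orbit $(x_0,t_*)_{i\ge0}$ near $x$, invokes the shadowing property (available by the main theorem and Remark~\ref{walker}) to obtain a shadowing point $y$, shows via the hyperbolic splitting that the forward orbit of $y$ must tend to $0$ (because boundedness of $\{T(t)y\}$ forces $y^N=0$), and then reads off $\|x_0\|\le\epsilon/2$ from the shadowing estimate at the return times. Your proof bypasses shadowing entirely and works directly with the exponential dichotomy \eqref{left}: if $x^N\neq0$ the expanding estimate pushes every nearby orbit out of a fixed ball after a uniform time, and if $x^M\neq0$ the contracting estimate pulls the $M$-component of every nearby orbit below $\|x^M\|-r$ after a uniform time. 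What your approach buys is independence from the main theorem and from any shadowing machinery; the proposition becomes a standalone consequence of the spectral splitting. What the paper's approach buys is that it illustrates the shadowing property in action and fits the narrative of the paper, where Proposition~\ref{calor} is presented as an application. Your care in noting that the threshold time $R$ is uniform over $y\in U$ is exactly the point needed to contradict the nonwandering definition, and your use of the max norm \eqref{anis} to project the distance onto each summand is legitimate since that renorming was already justified in the paper.
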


\begin{proof}
It is clear that $0$ is nonwandering so we just need to prove that any nonwandering point $x$ (say) vanishes.
For this, let us take $\epsilon>0$.
On the other hand, it follows from the proof of the theorem that the problem has the shadowing property (see Remark \ref{walker}).
So, let $\delta,R>0$ with $0<\delta<\epsilon$ be given by this property for $\frac{\epsilon}2$ and the semigroup $T$.
Since $x$ is nonwandering, there are $x_0\in X$ with $\|x_0-x\|<\frac{\delta}2$ and $t_*\geq R$ such that $\|T(t_*)x_0-x\|<\frac{\delta}2$.
Since
$$
\|T(t_*)x_0-x_0\|\leq \|T(t_*)x_0-x\|+\|x-x_0\|<\frac{\delta}2+\frac{\delta}2=\delta,
$$
the sequence $(x_i,t_i)_{i\geq0}$ defined by $(x_i,t_i)=(x_0,t_*)$ for all $i\geq0$ is a $(\delta,R)$-pseudo orbit.
Then, there are $y\in X$ and an increasing homeomorphism $h:[0,\infty)\to [0,\infty)$ such that
\begin{equation}
\label{nigan}
\|T(h(t))y-x_0*t\|\leq\frac{\epsilon}2,\quad\quad\forall t\geq0.
\end{equation}
But $\hat{t}_i=it_*$ so $x_0*t=T(t-it_*)x_0\in \{T(s)x_0:0\leq s\leq t_*\}$ which is bounded thus there is $0<K<\infty$ such that
$\|T(t)y\|<K$ for all $t\geq0$.

By Proposition 1.15 p. 305 in \cite{en},
there are a direct sum $X=M\oplus N$ by closed subspaces $M,N$ and numbers
$\lambda_M,\lambda_N,K_M,K_N>0$ such that
$T(t)M= M$, $T(t)N=N$, $T(t)|_N:N\to N$ is invertible,
$$
\|T(t)|_M\|\leq K_Me^{-t\lambda_M}
\quad\mbox{ and }\quad
\|(T(t)|_N)^{-1}\|\leq K_Ne^{-t\lambda_N}
\quad\quad\forall t\geq0.
$$
Write $y=y^M+y^N\in M\oplus N$.
It follows that
$\{T(t)y^N:t\geq0\}$ is bounded. Since
$\|T(t)y^N\|\geq K^{-1}_Ne^{t\lambda_N}\|y^N\|$ for all $t\geq0$ we conclude that
$y^N=0$. It follows that $y=y^M$ so
$$
\lim_{t\to\infty}T(t)y=0.
$$
But since $x_0*\hat{t}_i=x_0$ for all $i\geq0$, \eqref{nigan} implies $\|x_0\|\leq\frac{\epsilon}2$.
Then, $\|x\|\leq \|x-x_0\|+\|x_0\|\leq \frac{\delta}2+\frac{\epsilon}2<\epsilon$.
Letting $\epsilon\to0$ we get $x=0$ completing the proof.
\end{proof}

Now, consider again the Cauchy problem \eqref{eq1}. Given $\delta,R>0$ we call a sequence
$(x_i,t_0)_{i=0}^k$ {\em $(\delta,R)$-chain} for the problem if $t_i\geq R$ and $\|T(t_i)x_i-x_{i+1}\|<\delta$ ($\forall i\geq0$) where $T$ is the semigroup generated by $A$.
In such a case, we say that the chain is from $x_0$ to $x_k$.
Given $x,y\in X$, we say that $x$ is $(\delta,R)$-related to $y$ if
there are $(\delta,R)$-chains from $x$ to $y$ and from $y$ back to $x$.
We write $x\sim y$ if $x$ is $(\delta,R)$-related to $y$ for all $\delta,R>0$.
The {\em chain recurrent set} is the set of $x\in X$ such that $x\sim x$.

Based on the previous proposition we obtain the following result.

\medskip

\begin{prop}
\label{poco}
If the Cauchy problem given by equation \eqref{eq1} is hyperbolic, then $0$ is the only chain recurrent point of the problem.
\end{prop}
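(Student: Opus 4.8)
The plan is to reduce the statement to Proposition~\ref{calor} by showing that, when \eqref{eq1} is hyperbolic, every chain recurrent point is in fact nonwandering. Since $0$ is trivially chain recurrent, this will finish the proof. The key input is that a hyperbolic Cauchy problem has the shadowing property, as established in the proof of the theorem (see Remark~\ref{walker}).

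So I would fix a chain recurrent point $x$, a neighborhood $U$ of $x$, and a number $R>0$, and try to produce $z\in U$ and $t\geq R$ with $T(t)z\in U$ (where $T$ is the $C_0$-semigroup generated by $A$). First choose $r>0$ with $B(x,r)\subseteq U$ and feed $\epsilon=r/2$ into the shadowing property to obtain $\delta,R'>0$. Chain recurrence of $x$ then yields a $(\delta,R')$-chain $(x_i,t_i)_{i=0}^k$ from $x$ to $x$, so $x_0=x_k=x$, each $t_i\geq R'$, and $\|T(t_i)x_i-x_{i+1}\|<\delta$. Repeating this finite chain periodically produces an infinite sequence $(y_j,s_j)_{j\geq0}$ with $y_j=x_{j\bmod k}$ and $s_j=t_{j\bmod k}$; the only point to check is that the wrap-around is legitimate, which holds because $\|T(s_{k-1})y_{k-1}-y_k\|=\|T(t_{k-1})x_{k-1}-x_0\|<\delta$ since $x_k=x$. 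Thus $(y_j,s_j)_{j\geq0}$ is a genuine $(\delta,R')$-pseudo orbit (no limit condition is required, as we are using plain shadowing, not L-shadowing).

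Applying shadowing gives $y\in X$ and an increasing homeomorphism $h:[0,\infty)\to[0,\infty)$ with $\|T(h(t))y-x_0*t\|\leq r/2$ for all $t\geq0$. Evaluating at $t=0$, where $h(0)=0$ and $x_0*0=y_0=x$, gives $\|y-x\|\leq r/2<r$, hence $y\in U$. Setting $P=t_0+\cdots+t_{k-1}>0$, one checks that the partial sums satisfy $\hat s_{mk}=mP$ and therefore $x_0*(mP)=y_{mk}=x$ for every $m\geq0$; so $\|T(h(mP))y-x\|\leq r/2<r$, i.e. $T(h(mP))y\in U$. Since an increasing homeomorphism of $[0,\infty)$ is unbounded, $h(mP)\to\infty$, so I can fix $m$ with $h(mP)\geq R$ and take $z:=y$ and $t:=h(mP)$. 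This shows $x$ is nonwandering, and Proposition~\ref{calor} then forces $x=0$; combined with the obvious fact that $0$ is chain recurrent, the proof is complete.

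The only real obstacle I anticipate is the bookkeeping with the $*$-notation for the periodic pseudo orbit: verifying that repeating a $(\delta,R')$-chain from $x$ to $x$ gives an honest pseudo orbit, and that it returns exactly to $x$ at the times $mP$. The other ingredient worth flagging explicitly --- that an increasing homeomorphism of $[0,\infty)$ sends $+\infty$ to $+\infty$ --- is what allows the reparametrized shadowing orbit to satisfy the lower time bound in the definition of a nonwandering point; beyond that, the argument is a direct combination of Proposition~\ref{calor} with the shadowing property coming from the proof of the theorem.
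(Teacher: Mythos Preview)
Your proof is correct and follows essentially the same route as the paper's: reduce to Proposition~\ref{calor} by showing that, via the shadowing-type property established in the theorem, every chain recurrent point is nonwandering. The paper's proof merely asserts this implication in one line (invoking the L-shadowing property rather than plain shadowing), whereas you spell out the periodization-and-shadowing argument in full detail; using plain shadowing as you do is in fact the cleaner choice, since the periodized chain need not satisfy the limit condition required by L-shadowing.
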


\begin{proof}
The problem is hyperbolic so $0$ is the only nonwandering point by Proposition \ref{calor}.
Then, the result follows since the problem has the L-shadowing property (by the theorem) and this property easily implies that every chain recurrent point is nonwandering. 
\end{proof}

In particular, the hyperbolic Cauchy problems \eqref{eq1} are excluded from the hypercyclic
theory of semigroups \cite{mp}.

We can also apply our theorem to some PDEs.
An example is the heat equation
in a bounded domain $\Omega\subset \mathbb{R}^N$ with boundary $\partial \Omega$,

\begin{equation}
\label{bolso}
\left\{ \begin{array}{rll}
u_t& = & \Delta u,  \hbox{ in }  \Omega\times (0,\infty) \\
u & = & 0,  \,\,\, \,\hbox{ on }  \partial\Omega\times (0,\infty)\\
u(\cdot,0)& = & u_0, \,\, \hbox{ in }  \Omega.
\end{array}
\right.
\end{equation}

\medskip

Transform this equation to the Cauchy problem \eqref{eq1}
with $X=L^2(\Omega)$, and $A:D(A)\to X$ defined by
$$
D(A)=H^2(\Omega)\cap H^1_0(\Omega)\quad\mbox{ and }
\quad
Au=\Delta u,\quad\forall u\in D(A).
$$
See p. 327 in \cite{brezis}.
It follows that $A$ is self-adjoint (hence normal), and $L^2(\Omega)$ is Hilbert so $\sigma(e^{tA})$ is the closure of $\{e^{t\lambda}:\lambda\in\sigma(A)\}$ for all $t>0$ \cite{pru} (here $e^{tA}$ denotes the $C_0$-semigroup generated by $A$). However, $\sigma(A)$ consists of a sequence $0<\lambda_1(\Omega)\leq\lambda_2(\Omega)\leq \lambda_k(\Omega)\leq\cdots\to\infty$
(c.f. Theorem 1.2.2 in \cite{he}), so $\sigma(e^{tA})\cap S^1=\emptyset$ for large $t>0$, thus $e^{tA}$ is hyperbolic for some (and then for all) $t>0$. Therefore, the Cauchy problem is hyperbolic, and it has both the shadowing and L-shadowing properties according to the theorem. As a result, we conclude that the heat equation \eqref{bolso} also possesses the L-shadowing property. It is possible that this result follows from different arguments (e.g. \cite{hen}).

Another equation is
\begin{equation}
\label{naro}
\left\{ \begin{array}{rll}
u_t& = & u_x-\theta u, \hbox{ in }  \mathbb{R}\times (0,\infty) \\
u(\cdot,0)& = & u_0,  \quad \quad \,\,\hbox{ in }  \mathbb{R}
\end{array}
\right.
\end{equation}
with $\theta\neq0$.
Transform this equation into the Cauchy problem
\eqref{eq1} with $X=C_b(\mathbb{R})$ (the set of bounded uniformly continuous maps $u:\mathbb{R}\to \mathbb{R}$ endowed with the supremum norm)
and $A:D(A)\to X$ defined by
$$
D(A)=C^1_b(\mathbb{R})
\quad\mbox{ and }
\quad A(u)=u'-\theta u,\quad\forall u\in D(A),
$$
where $C^1_b(\mathbb{R})$ is the subspace of $C_b(\mathbb{R})$ consisting of all $u$ such that the classical derivative
$$
u'(x)=\lim_{y\to x}\frac{u(y)-u(x)}{y-x}
$$
exists for all $x\in\mathbb{R}$ and belongs to $C_b(\mathbb{R})$.
Since $A$ generates the $C_0$-semigroup
$T$ of $X$ defined by
$$
(T(t)u)x=e^{-t\theta}u(x+t)\quad\quad(\forall x,t\geq0)
$$
which is clearly hyperbolic, one has that the equation has the L-shadowing property by the theorem.

An intriguing question arises from our theorem: Specifically, we wonder whether every Cauchy problem with the shadowing property is hyperbolic. However, our findings suggest that the answer to this question is negative. In fact, this is proven to be false in the case of invertible linear operators on Banach spaces. These operators can possess the shadowing property without being hyperbolic. Recently, a conjecture has emerged, stating that an invertible linear operator exhibits the shadowing property if and only if it is generalized hyperbolic \cite{ddm}. Consequently, it becomes imperative to restate this conjecture within the framework of semigroups of linear operators.

First of all, we say that a linear operator of a Banach space $B:X\to X$ is {\em generalized hyperbolic} if
there are a direct sum $X=M\oplus N$ by closed subspaces $M,N$ and $K,\lambda>0$ such that
$B(M)\subset M$, $N\subset B(N)$,
$$
\|B^nx\|\leq Ke^{-n\lambda}\quad(\forall x\in M,\, n\geq0)
\quad \mbox{ and }
\quad
\|B^nx\|\geq K^{-1}e^{n\lambda}\quad(\forall x\in N,\, n\geq0).
$$
This definition clearly reduces to the original one \cite{cgp} when $B$ is invertible.
As in \cite{bcdmp} it can be proved that every generalized hyperbolic operator $B$ as above has the (positive) shadowing property.

Based on this definition and the description of hyperbolic semigroups in p. 305 of \cite{en} we propose the following definition.

\medskip

\begin{definition}
We say that a semigroup of linear operators $T$ on a Banach space $X$ is {\em generalized hyperbolic} if there are a direct sum $X=M\oplus N$ by closed subspace $M,N$
and $K,\lambda>0$ such that
$T(t)M\subset M$ and $N\subset T(t)N$ ($\forall t\geq0$),
$$
\|T(t)x\|\leq Ke^{-t\lambda}\|x\|\quad(\forall x\in M, t\geq0)
\mbox{ and }
\|T(t)x\|\geq K^{-1}e^{t\lambda}\|x\|\quad(\forall x\in N, t\geq0).
$$
We say that the Cauchy problem \eqref{eq1} is generalized hyperbolic if
$A$ generates a generalized hyperbolic $C_0$-semigroup.
\end{definition}

\medskip

If the Cauchy problem \eqref{eq1} is hyperbolic, then it is generalized hyperbolic.
The converse is false by the following proposition.

\medskip

\begin{prop}
There is a Cauchy problem \eqref{eq1} which is generalized hyperbolic but not hyperbolic.
\end{prop}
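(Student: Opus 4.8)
The plan is to exhibit a concrete $C_0$-semigroup $T=(T(t))_{t\ge0}$ on a Banach space $X$, together with a splitting $X=M\oplus N$ witnessing generalized hyperbolicity, and then to show $\sigma(T(1))\cap S^1\neq\emptyset$, so that $T$ — hence the Cauchy problem \eqref{eq1} it governs — is not hyperbolic. The natural construction is a \emph{block–triangular} semigroup. Choose a $C_0$-semigroup $T_M=(T_M(t))$ on a Banach space $M$ with $\|T_M(t)\|\le Ke^{-\lambda t}$ (for instance $T_M(t)=e^{-t}\,\mathrm{id}_M$); choose a $C_0$-semigroup $\tilde T=(\tilde T(t))$ on a Banach space $N$ whose generator $A_N$ has spectrum a half–plane $\{\operatorname{Re}z\le c\}$ with $c>0$ — e.g. a suitably rescaled left–translation semigroup $(\tilde T(t)g)(x)=e^{ct}g(x+t)$ on $N=L^2(0,\infty)$ — so that by the spectral inclusion theorem $\overline{e^{\sigma(A_N)}}\subseteq\sigma(\tilde T(1))$ is a disc of radius $e^{c}$ containing $S^1$; and couple them through operators $C(t)\colon N\to M$ satisfying the cocycle identity $C(s+t)=T_M(s)C(t)+C(s)\tilde T(t)$, so that $T(t)=\left(\begin{smallmatrix}T_M(t)&C(t)\\0&\tilde T(t)\end{smallmatrix}\right)$ is a $C_0$-semigroup (equivalently: pick the generator $A=\left(\begin{smallmatrix}A_M&D\\0&A_N\end{smallmatrix}\right)$, check it generates a $C_0$-semigroup, and set $T(t)=e^{tA}$; then $C(t)=\int_0^tT_M(t-s)D\,\tilde T(s)\,ds$ and the cocycle holds automatically).

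With such a $T$ the verification of generalized hyperbolicity breaks into four steps, to be carried out in this order. (i) $T(t)M\subseteq M$ is immediate from the triangular form, and $\|T(t)|_M\|=\|T_M(t)\|\le Ke^{-\lambda t}$. (ii) $N\subseteq T(t)N$: here $C(t)$ must be arranged so that for each $h\in N$ one can produce $g\in N$ with $\tilde T(t)g=h$ and $C(t)g=0$; concretely, since $\tilde T(t)$ is not injective, pick the distinguished right–inverse $g$ of $\tilde T(t)$ supported off the "forgotten" interval $(0,t)$ and choose $D$/$C(t)$ so that $C(t)$ depends only on the forgotten part of its argument, hence kills such $g$. (iii) Exponential expansion on $N$: split $x\in N$ into the part $x_1$ surviving under $\tilde T(t)$ and the part $x_0$ killed by it; the dilation factor $e^{ct}$ built into $\tilde T$ handles $x_1$, and $C(t)$ must be chosen to expand $x_0$ by a comparable factor, giving $\|T(t)x\|\ge K^{-1}e^{\lambda t}\|x\|$. (iv) Non-hyperbolicity: because $T(1)$ is triangular and $\sigma(T(1)|_M)$ lies strictly inside the unit disc (its spectral radius is $\le e^{-\lambda}$), $\sigma(T(1))$ contains the spectrum of the operator induced on $X/M\cong N$, which is conjugate to $\tilde T(1)$; since $\sigma(\tilde T(1))\supseteq e^{\sigma(A_N)}$ contains points of modulus one, $\sigma(T(1))\cap S^1\neq\emptyset$. (As a check one can instead exhibit an approximate eigenvector of $T(1)$ directly, using the truncated formal solutions of $(A-is)u=0$ for suitable $s\in\mathbb R$.)

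The hard part is step (ii) together with step (iii): making $N\subseteq T(t)N$ hold while $T(t)$ genuinely expands $N$. These requirements pull against each other — the cocycle identity and the exponential contraction on $M$ tightly constrain how fast $\|C(t)\|$ may grow, yet the component of $N$ that $\tilde T(t)$ forgets must be expanded exponentially by $C(t)$, and this must be done without destroying the inclusion $N\subseteq T(t)N$. Producing an honest coupling $C(t)$ (equivalently the right off–diagonal $D$ in the generator) that threads this needle, and then confirming that the spectrum of the actual operator $T(1)$ — not merely of the auxiliary quotient — crosses $S^1$, is the technical core of the argument.
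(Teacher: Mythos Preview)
Your plan cannot be completed with the ingredients you have chosen; the tension you flag in steps (ii)--(iii) is not merely technical but fatal. Take any nonzero $n\in N=L^2(0,\infty)$ supported in $(0,t_0)$. Then $\tilde T(t)n=0$ for every $t\ge t_0$, so $T(t)(0,n)=(C(t)n,0)\in M$ for $t\ge t_0$. Now the cocycle identity gives, for $s\ge 0$,
\[
C(t_0+s)n \;=\; T_M(s)\,C(t_0)n \;+\; C(s)\,\tilde T(t_0)n \;=\; T_M(s)\,C(t_0)n,
\]
and since you chose $\|T_M(s)\|\le Ke^{-\lambda s}$ this forces $\|T(t)(0,n)\|=\|C(t)n\|\le Ke^{-\lambda(t-t_0)}\|C(t_0)n\|\to 0$. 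Thus $\|T(t)x\|\ge K^{-1}e^{\lambda t}\|x\|$ fails for this $x=(0,n)\in N$, and the semigroup is \emph{not} generalized hyperbolic, no matter how $D$ (equivalently $C(t)$) is chosen. The obstruction is structural: in any upper--triangular $C_0$-semigroup whose $M$-block is uniformly exponentially contracting, every vector of $N$ that is eventually annihilated by the quotient semigroup $\tilde T$ has orbit tending to $0$, so $N$ cannot be an expanding subspace. And you cannot escape by making $\tilde T$ injective and expanding on all of $N$: then $\sigma(\tilde T(1))\subset\{|z|\ge e^{\lambda}\}$, whence $\sigma(T(1))\subset\sigma(T_M(1))\cup\sigma(\tilde T(1))$ misses $S^1$ and $T$ is hyperbolic after all.

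The paper avoids this trap entirely. It takes a single \emph{invertible} weighted translation semigroup on $L^2(\mathbb R)$,
\[
(T(t)u)(x)=\frac{w(x)}{w(x+t)}\,u(x+t),\qquad w(x)=e^{|x|},
\]
and splits $L^2(\mathbb R)=M\oplus N$ by restriction to the two half--lines. Because each $T(t)$ is a bijection of $L^2(\mathbb R)$, the inclusion $N\subset T(t)N$ is obtained simply from $T(t)^{-1}N\subset N$, and the exponential estimates on $M$ and $N$ come directly from the weight; there is no coupling to engineer and no ``forgotten'' part of $N$ to worry about. Non-hyperbolicity is then read off dynamically: a compactly supported $u\in N\setminus\{0\}$ satisfies $T(t)u\to 0$ and $T(t)^{-1}u\to 0$, so $u$ is a nonzero chain recurrent point, which Proposition~\ref{poco} forbids for hyperbolic problems. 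This is the continuous--time analogue of the standard weighted bilateral shift example from \cite{cgp,bcdmp}; I would recommend reworking your argument along those lines rather than pursuing the block--triangular scheme.
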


\begin{proof}
Define $w:\mathbb{R}\to\mathbb{R}$ by $w(x)=e^{|x|}$ for all $x\in\mathbb{R}$.
Consider the Cauchy problem \eqref{eq1} in $X=L^2(\mathbb{R})$ with
$A:D(A)\to L^2(\mathbb{R})$ defined by
$$
D(A)=H^1(\mathbb{R}):=\{u\in L^2(\mathbb{R}):u'\in L^2(\mathbb{R})\}
\quad\mbox{ and }\quad A(u)=u'-\frac{w'}wu,\quad\forall u\in D(A)
$$
(derivatives in the weak sense).

Note that $A$ generates the $C_0$-semigroup
$$
(T(t)u)(x)=\frac{w(x)}{w(x+t)}u(x+t),\quad\quad\forall u\in L^2(\mathbb{R}),\, x\in \mathbb{R},\, t\geq0.
$$
To see this fix $u\in H^1(\mathbb{R})$. Given $\epsilon>0$ one has $1\leq e^t<1+\epsilon$ for $t\geq0$ small so straightforward computations show
\begin{eqnarray*}
\left\|
\frac{T(t)u-u}t-u'+\frac{w'}wu\right\|^2_{L^2} & \leq &
e^t\int\left|\frac{u(x+t)-u(x)}t-u'(x)\right|^2dx
+\epsilon^2\|u'\|^2_{L^2}+\\
& & \|u\|^2_{L^2}\cdot \int\left|
\frac{w(x+t)-w(x)}t-w'(x)\right|^2dx+\\
& & (e^t-1)\|u\|_{L^2}^2.
\end{eqnarray*}
Then, the assertion follows by letting $\epsilon,t\to0$ since the $L^2$ and weak derivatives coincide (e.g. Exercise 1.9 p. 21 in \cite{lp}).

Now, define
$$
M=\{u\in L^2(\mathbb{R}):u=0\mbox{ a.e. in }[0,\infty)\}
\mbox{ and }
N=\{u\in L^2(\mathbb{R}):u=0\mbox{ a.e. in }(-\infty,0]\}.
$$
Then, $M$ and $N$ are closed subspaces with $L^2(\mathbb{R})=M\oplus N$.
Clearly
$T(t)M\subset M$ for all $t\geq0$.
On the other hand, each $T(t)$ is invertible with inverse
$T(t)^{-1}:L^2(\mathbb{R})\to L^2(\mathbb{R})$ given by
$$
(T(t)^{-1}u)(x)=\frac{w(x)}{w(x-t)}u(x-t),\quad\quad\forall x\in\mathbb{R}.
$$
We can easily check $T(t)^{-1}N\subset N$ hence $N\subset T(t)N$ for all $t\geq0$.
Since for all $u\in M$ and $t\geq0$
$$
\|T(t)u\|^2_{L^2}=\int_{-\infty}^\infty \left(\frac{w(x)}{w(x+t)}\right)^2|u|^2dx
=\int_0^\infty e^{-2t}|u|^2dx
=e^{-2t}\|u\|^2_{L^2},
$$
one has
$\|T(t)u\|_{L^2}=e^{-t}\|u\|_{L^2}$ for all $u\in M$ and $t\geq0$. Likewise,
$\|T(t)^{-1}u\|_{L^2}=e^{-t}\|u\|_{L^2}$ for all $u\in N$  and $\forall t\geq0$. We then conclude that $T$ (and therefore its corresponding Cauchy problem) are generalized hyperbolic.

To finish, let us prove that the problem is not hyperbolic. Indeed,
take any continuous function $u:\mathbb{R}\to\mathbb{R}$ with $u\neq0$
whose support
$supp(u)=\{x\in \mathbb{R}:u(x)\neq0\}$ is contained in a compact interval of $(0,\infty)$.
Clearly $u\in N$ so
$T(t)^{-1}u\to0$ in $L^2(\mathbb{R})$ as $t\to\infty$.
It follows also that $\{x\in \mathbb{R}:u(x+t)\neq0\}=supp(u)-t$ for all $t\geq0$ so
$T(t)u\in M$ for $t>0$ large.
Then, $T(t)u\to 0$ in $L^2(\mathbb{R})$ as $t\to\infty$. This implies that $u$ is recurrent, and
so, the problem cannot be hyperbolic by Proposition \ref{poco}. This completes the proof.
\end{proof}

\medskip

Just as every generalized hyperbolic invertible operator in a Banach space possesses the shadowing and L-shadowing properties \cite{bcdmp,lm}, we believe that the same holds true for every generalized hyperbolic Cauchy problem. If this belief is valid, it implies that the proposition mentioned earlier offers an example of a nonhyperbolic Cauchy problem that exhibits both shadowing properties.

\section*{Acknowledgements}
We thank the Sejong Institute for Mathematical Sciences (SIMS), Sejong, South Korea, for its warm hospitality during the preparation of this work.

\section*{Funding}
Partially supported by 
the Basic Science Research Program through the NRF funded by the Ministry of Education of the Republic of Korea (Grant Number: 2022R1l1A3053628).

\section*{Declaration of competing interest}

\noindent
There is no competing interest.

\section*{Data availability}

\noindent
No data was used for the research described in the article.

\end{document}